\newcommand{\R}{{\mathbb{R}^n}}
\title[Revised logarithmic Sobolev inequalities of fractional order]{Revised 
logarithmic Sobolev inequalities of fractional order}
\author[M. Chatzakou]{Marianna Chatzakou}
\address{
	Marianna Chatzakou:
	\endgraf
	Department of Mathematics: Analysis, Logic and Discrete Mathematics
	\endgraf
	Ghent University, Krijgslaan 281, Building S8, B 9000 Ghent
	\endgraf
	Belgium
	\endgraf
	{\it E-mail address} {\rm marianna.chatzakou@ugent.be}}
\author[M. Ruzhansky]{Michael Ruzhansky}
\address{
	Michael Ruzhansky:
	\endgraf
	Department of Mathematics: Analysis, Logic and Discrete Mathematics
	\endgraf
	Ghent University, Krijgslaan 281, Building S8, B 9000 Ghent
	\endgraf
	Belgium
	\endgraf
	and
	\endgraf
	School of Mathematical Sciences
		\endgraf Queen Mary University of London 
			\endgraf
		United Kingdom
			\endgraf
	{\it E-mail address} {\rm michael.ruzhansky@ugent.be}}
 \subjclass[2010]{45N05}
\thanks{The authors are supported by the FWO Odysseus 1 grant G.0H94.18N: Analysis and Partial Differential Equations and by the Methusalem programme of the Ghent University Special Research Fund (BOF) (Grant number 01M01021). M. Chatzakou is a postdoctoral fellow of the Research Foundation – Flanders (FWO)  under the postdoctoral grant No 12B1223N. M. Ruzhansky is also supported by EPSRC grant EP/R003025/2.}
\keywords{Logarithmic Sobolev inequality; higher order derivatives}
\newtheoremstyle{theorem}
{10pt}          
{10pt}  
{\sl}  
{\parindent}     
{\bf}  
{. }    
{ }    
{}     
\theoremstyle{theorem}
\numberwithin{equation}{section}
\theoremstyle{plain}
\newtheorem{thm}{Theorem}[section]
\theoremstyle{definition}
\newtheorem{rem}[thm]{Remark}
\newtheoremstyle{defi}
{10pt}          
{10pt}  
{\rm}  
{\parindent}     
{\bf}  
{. }    
{ }    
{}     
\theoremstyle{defi}
\begin{document}

 	\begin{abstract}
In this short note we prove the logarithmic Sobolev inequality with derivatives of fractional order on $\R$ with an explicit expression for the constant. Namely, we show that for all $0<s<\frac{n}{2}$  and $a>0$ we have the inequality 
\[
 \int_{\R}|f(x)|^2 \log \left( \frac{|f(x)|^2}{\|f\|^{2}_{L^2(\R)}}\right)\,dx+\frac{n}{s}(1+\log a)\|f\|_{L^2(\R)}^{2}\leq C(n,s,a)\|(-\Delta)^{s/2}f\|^{2}_{L^2(\R)}
\]
with an explicit $C(n,s,a)$ depending on $a$, the order $s$, and the dimension $n$, and investigate the behaviour of $C(n,s,a)$ for large $n$. Notably, for large $n$ and when $s=1$, the constant $C(n,1,a)$ is asymptotically the same as the sharp constant of Lieb and Loss that was computed in \cite{LL01}. Moreover, we prove a similar type inequality for functions  $f \in L^q(\R)\cap W^{1,p}(\R)$ whenever $1<p<n$ and $p<q\leq \frac{p(n-1)}{n-p}$.
	\end{abstract}
	\maketitle

\section{Introduction}

The main purpose of this note is to extend the logarithmic Sobolev inequality by Lieb and Loss, see \cite{LL01}, which states that for $a>0$ we have
\begin{equation}
    \label{lsineq}
    \int_\R |f(x)|^2 \log\left(\frac{|f(x)|^2}{\|f\|_{L^2(\R)}^2} \right)\,dx+n(1+\log a)\|f\|_{L^2(\R)}^2\leq \frac{a^2}{\pi}\int_\R |\nabla f(x)|^2\,dx\,,
\end{equation}
where $dx$ stands for the Lebesgue measure on $\mathbb{R}^n$. The inequality \eqref{lsineq} is satisfied as an equality if and only if the function $f$ is a multiple of the function $\exp\left( -\frac{\pi |x|^2}{2 a^2}\right)$.

Particularly in Theorem \ref{main.thm}  we extend the logarithmic Sobolev inequality \eqref{lsineq} for $f \in W^{s,2}(\R)$  by considering derivatives of order $0<s<\frac{n}{2}$, and in Theorem \ref{THMpq} by considering functions  $f \in L^q(\R) \cap W^{1,p}(\R)$, where $1<p<n$ and that $p<q\leq \frac{p(n-1)}{n-p}$.

   At this point let us mention that, in the past other authors have attempted to extend the sharp logarithmic Sobolev inequality \eqref{lsineq} of Lieb and Loss for fractional order derivatives. To be precise, in \cite[Theorem 2.1]{CT05} the authors claim to have established the sharp version of the logarithmic Sobolev inequality on $\R$ for higher order derivatives. However, in their proof they estimate a term that appears after an application of Young’s inequality, using the Hausdorff–Young inequality, and it seems that the choices of the indices in the appearing norms cannot satisfy the assumptions on both inequalities simultaneously.

The logarithmic Sobolev inequality \eqref{lsineq} is related to the original logarithmic Sobolev inequality by Gross \cite{Gro75}
\begin{equation}
\label{or.ls}
 \int_\R |f(x)|^2\log\left(\frac{|f(x)|^2}{\|f\|_{L^2(d\mu)}^2} \right)\,d\mu \leq \frac{1}{\pi}\int_\R |\nabla f(x)|^2\,d\mu\,,   
\end{equation}
where $d\mu=e^{-\pi|x|^2}\,dx$ is the Gaussian measure, and the $L^2$ norm here is considered accordingly in the space $L^2(\R,d\mu)$. Logarithmic Sobolev inequalities and their Gross counterparts have also been studied in the general setting of Lie groups in \cite{CKR21}, to which we can also refer for a more detailed review of the literature.

 Let us now introduce the spaces of functions that are meaningful to our considerations:  for $\widehat{f}$ being the Fourier transform of $f \in L^1(\mathbb{R}^n)$, we define  
\[
\widehat{f}(\xi)=:\int_{\mathbb{R}^n}e^{-2\pi i x\xi}f(x)\,dx\,.
\]

The function $-\Delta f$ has a unique representation $\int_{\mathbb{R}^n}\widehat{-\Delta f}(\xi)e^{2\pi i x \cdot \xi}\,d\xi$, and we can write 
\[
\widehat{-\Delta f}(\xi)=(4\pi^2 |\xi|^2)\widehat{f}(\xi)\,.
\]
By the functional calculus, the operator $(-\Delta)^{s/2}$ can then be defined on the Fourier side as multiplication by $(2\pi |\xi|)^s$, i.e., we have 
\[
\widehat{(-\Delta)^{s/2}f}(\xi)=(2\pi|\xi|)^s \widehat{f}(\xi)\,.
\]
For $s>0$, the Sobolev space $W^{s,2}(\R)$ is the subspace of functions in $L^2(\R)$ for which 
\[
\|f\|_{W^{s,2}(\R)}^2:=\int_\R |\widehat{f}(\xi)|^2(4\pi^2|\xi|^2)^s\,d\xi< \infty\,,
\]
or, equivalently,  
\[
\|f\|_{W^{s,2}(\R)}=\|(-\Delta)^{s/2}f\|_{L^2(\R)}<\infty\,.
\]

More generally, for $p \geq 1$, the Sobolev space $W^{s,p}(\R)$ is defined as the subspace of $L^p(\R)$ for which
\[
\|f\|_{W^{s,p}(\R)}=\|(-\Delta)^{s/2}f\|_{L^p(\R)}<\infty\,.
\]
As usual, we will assume that $f \neq 0$.

\par Let us now present the two main results of the current note. 
\begin{thm}\label{main.thm}
Let $f\in W^{s,2}(\mathbb{R}^n)$, where $0<s<\frac{n}{2}$, and let $a>0$ be any real number. Then, we have the following inequality:
\begin{equation}
    \label{LS}
    \begin{split}
         \int_{\R}|f(x)|^2 \log \left( \frac{|f(x)|^2}{\|f\|^{2}_{L^2(\R)}}\right)\,dx & +\frac{n}{s}(1+\log a)\|f\|_{L^2(\R)}^{2}\\
         & \leq \frac{nea^2}{2s}C(n,s)\|(-\Delta)^{s/2}f\|_{L^2(\R)}^{2}\,,
    \end{split}
\end{equation}
where 
\begin{equation}
    \label{ConstantCsn}
    C(n,s)=\frac{\Gamma\left( \frac{n-2s}{2}\right)}{2^{2s}\pi^s\Gamma\left( \frac{n+2s}{2}\right)}\left( \frac{\Gamma(n)}{\Gamma\left(\frac{n}{2}\right)}\right)^{\frac{2s}{n}}\,.   
\end{equation}
\end{thm}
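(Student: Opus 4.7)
The plan is to combine the sharp fractional Sobolev embedding with Jensen's inequality and a one-parameter linearisation of the logarithm.

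First, I would invoke the sharp Sobolev inequality
\[
\|f\|_{L^{q}(\R)}^{2}\leq C(n,s)\,\|(-\Delta)^{s/2}f\|_{L^{2}(\R)}^{2},\qquad q=\frac{2n}{n-2s},
\]
with exactly the constant $C(n,s)$ displayed in \eqref{ConstantCsn}; this is the sharp Sobolev constant derived from the Hardy--Littlewood--Sobolev inequality of Lieb. The recognition that this particular constant appears in the target inequality is the main strategic point and strongly suggests that the Sobolev embedding, rather than a Young/Hausdorff--Young approach, is the right tool.

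Next, I would convert this $L^{q}$--$L^{2}$ bound into an entropy bound via Jensen's inequality. Writing $h=|f|^{2}/\|f\|_{L^{2}}^{2}$, so that $h\,dx$ is a probability measure on $\R$, and choosing $r=q/2=n/(n-2s)$, the concavity of the logarithm gives
\[
(r-1)\int_{\R}h\log h\,dx = \int_{\R}h\log(h^{r-1})\,dx \leq \log\int_{\R}h^{r}\,dx = \log\frac{\|f\|_{L^{q}}^{q}}{\|f\|_{L^{2}}^{q}}.
\]
Using the identity $\frac{q}{q-2}=\frac{n}{2s}$, this rearranges to
\[
\int_{\R}|f|^{2}\log\frac{|f|^{2}}{\|f\|_{L^{2}}^{2}}\,dx \leq \frac{n}{2s}\,\|f\|_{L^{2}}^{2}\,\log\frac{\|f\|_{L^{q}}^{2}}{\|f\|_{L^{2}}^{2}}.
\]

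Finally I would substitute the Sobolev bound inside the logarithm and linearise using the elementary inequality $\log y\leq y/b+\log b-1$, valid for all $y,b>0$ with equality at $y=b$. A direct computation shows that the choice $b=1/(ea^{2})$ tunes the coefficient of $\|(-\Delta)^{s/2}f\|_{L^{2}}^{2}$ to be exactly $\frac{nea^{2}}{2s}C(n,s)$, while the remainder term $\frac{n}{2s}(\log b-1)=-\frac{n}{s}(1+\log a)$ produces the additive correction on the left of \eqref{LS}; transposing this term yields the theorem. The only real obstacle is bookkeeping: the parameter $b$ must be selected so that \emph{both} the $\|(-\Delta)^{s/2}f\|_{L^{2}}^{2}$ coefficient and the $\|f\|_{L^{2}}^{2}$ coefficient match the stated form simultaneously, and this rigid matching is precisely what forces the choice $b=1/(ea^{2})$.
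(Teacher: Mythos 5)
Your proposal is correct and follows essentially the same route as the paper: Jensen's inequality with the probability measure $|f|^2\,dx/\|f\|_{L^2}^2$ and exponent $r-1=\varepsilon=2s/(n-2s)$, the linearisation $\log y\leq y/b+\log b-1$ with $b=1/(ea^2)$ (the paper writes the equivalent $\log x\leq bx-\log b-1$ with $b=ea^2$), and the sharp fractional Sobolev inequality of Cotsiolis--Tavoularis at $q=2n/(n-2s)$. All the constants you compute ($r/(r-1)=n/(2s)$ and the remainder $-\tfrac{n}{s}(1+\log a)$) match the paper's.
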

To give an idea of the asymptotic behavior of the constant on the right hand side, in Remark \ref{REM:infty}, using Stirling's formula, we show that for large $n\gg 1$, estimate \eqref{LS} can be written as 
\begin{multline}
   \int_{\R}|f(x)|^2 \log  \left( \frac{|f(x)|^2}{\|f\|^{2}_{L^2(\R)}}\right)\,dx+\frac{n}{s}(1+\log a)\|f\|_{L^2(\R)}^{2}\\
  \lesssim \frac{2^{s-1}a^2}{s\pi^s}e^{1-s}n^{1-s}\|(-\Delta)^{s/2}f\|_{L^2(\R)}^{2}\,.
\end{multline}
In particular, for $s=1$, asymptotically we have the same constant on the right hand side as the optimal constant $\frac{a^2}{\pi}$ in the inequality \eqref{lsineq} of Lieb and Loss.

\par For the proof of Theorem \ref{main.thm} we are going to use the classical Sobolev inequality in the Euclidean setting. Precisely, the sharp version of the Sobolev inequality for fractional orders was proved in \cite{CT04} and states that for $0<s< \frac{n}{2}$ and $q=\frac{2n}{n-2s}$ we have
\begin{equation}
    \label{Sob.sharp}
    \|u\|_{L^q(\R)}^{2}\leq C(n,s)\|u\|_{W^{s,2}(\R)}^{2}\,,
\end{equation}
where $u \in W^{s,2}(\mathbb{R}^n)$, and the sharp constant $C(n,s)$ is given by \eqref{ConstantCsn}. Inequality \eqref{Sob.sharp} for integer orders was proved in \cite{Swa92}. 

Inequality \eqref{Sob.sharp} holds true as an equality if and only if $u$ is a multiple of the function $(c^2+(x-d)^2)^{-(n-2s)/2}$, where $c \in \mathbb{R} \setminus \{0\}$ and $d \in \mathbb{R}$.
The best constant $C(n,s)$ was given earlier in the special cases where $s=1$, $s=1/2$ and $s=2$, in \cite{Au98,Au76,Ta76}, \cite{LL01} and \cite{DHM00,Vo93}, respectively.
Our second result is a variation of the previous result in the case where $f \in L^q(\R) \cap W^{1,p}(\R)$ and reads as follows:
\begin{thm}
    \label{THMpq}
   Assume that $1<p<n$ and that $p<q\leq \frac{p(n-1)}{n-p}$. Let $a>0$ be any real number. Then for $f \in L^q(\R) \cap W^{1,p}(\R)$, we have 
       
        \begin{multline} \label{pq}        
        \int_{\R} |f(x)|^q \log \left(\frac{|f(x)|^q}{\|f\|^{q}_{L^q(\R)}} \right)\,dx+(1+\log a)\frac{p(q-1)}{q-p}\|f\|_{L^q(\R)}^{q} \\
        \leq a\frac{p(q-1)}{q-p}\mathfrak{S}(n,p,q)\|\nabla f\|_{L^p{(\R)}}^{\theta}\|f\|^{1-\theta}_{L^q(\R)}\,,
         \end{multline}
         where \begin{equation}
    \label{theta}
    \theta=\frac{(q-p)n}{(q-1)(np-(n-p)q)}\,,
\end{equation}
and 
\begin{equation}
    \label{Snpq}
    \mathfrak{S}(n,p,q)=\left(\frac{q-p}{p\sqrt{\pi}} \right)^\theta\left(\frac{pq}{n(q-p)} \right)^{\frac{\theta}{p}}\left(\frac{\delta}{pq} \right)^{\frac{1}{r}}\left(\frac{\Gamma\left( q\frac{p-1}{q-p}\right)\Gamma\left( \frac{n}{2}+1\right)}{\Gamma\left( \frac{p-1}{p}\frac{\delta}{q-p}\right)\Gamma\left(n \frac{p-1}{p}+1 \right)} \right)^{\frac{\theta}{n}}\,,
\end{equation}
where $r=p\frac{q-1}{p-1}$ and $\delta=np-q(n-p)>0$. 
\end{thm}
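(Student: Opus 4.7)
The plan is to parallel the proof of Theorem~\ref{main.thm}: apply Jensen's inequality to convert the entropy-type integral into the logarithm of a Lebesgue norm ratio, invoke a sharp Gagliardo--Nirenberg inequality to estimate that ratio, and then linearise the logarithm by the tangent-line bound $\log t \leq \log \tilde a + t/\tilde a - 1$, choosing $\tilde a$ so that the prefactors come out exactly as in~\eqref{pq}.

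Concretely, set $r = p(q-1)/(p-1)$; the range assumption $p < q \leq p(n-1)/(n-p)$ guarantees $p<q<r$ and $\delta = np-q(n-p)>0$. Write $\phi = |f|^q/\|f\|_{L^q(\R)}^q$, which is a probability density on $\R$. For the Jensen exponent $s = r/q > 1$, concavity of $\log$ gives
\[
\int_{\R} \phi\log\phi\,dx \;\leq\; \frac{1}{s-1}\log\int_{\R}\phi^s\,dx \;=\; \frac{pq(q-1)}{q-p}\,\log\!\left(\frac{\|f\|_{L^r(\R)}}{\|f\|_{L^q(\R)}}\right),
\]
after using $\int\phi^s\,dx = \|f\|_{L^r}^r/\|f\|_{L^q}^r$ together with the simplifications $s-1 = (q-p)/(q(p-1))$ and $r = p(q-1)/(p-1)$. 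Multiplying through by $\|f\|_{L^q}^q$ recovers the entropy-type integral that appears on the left-hand side of~\eqref{pq}.

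Next, the $L^r/L^q$ ratio would be estimated by the sharp Gagliardo--Nirenberg inequality of Del~Pino--Dolbeault,
\[
\|u\|_{L^r(\R)} \;\leq\; G(n,p,q)\,\|\nabla u\|_{L^p(\R)}^{\theta}\,\|u\|_{L^q(\R)}^{1-\theta},
\]
whose exponent $\theta$ is forced by dilation invariance and coincides with~\eqref{theta}, and whose extremal is $u^{\ast}(x)=(1+|x|^{p/(p-1)})^{-(p-1)/(q-p)}$. To identify the sharp constant $G(n,p,q)$ with the explicit expression $\mathfrak{S}(n,p,q)$ in~\eqref{Snpq}, I would compute each of $\|u^\ast\|_{L^q}$, $\|u^\ast\|_{L^r}$ and $\|\nabla u^\ast\|_{L^p}$ by the radial substitution $t = |x|^{p/(p-1)}$, which reduces them to Beta integrals of the form $\int_0^\infty t^{b-1}(1+t)^{-c}\,dt = \Gamma(b)\Gamma(c-b)/\Gamma(c)$. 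Combining these, together with the sphere measure $\omega_{n-1} = n\pi^{n/2}/\Gamma(n/2+1)$, produces precisely the Gamma-function ratio appearing in $\mathfrak{S}(n,p,q)$, along with the numeric factors $(q-p)/(p\sqrt\pi)$, $pq/(n(q-p))$, and $\delta/(pq)$.

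Finally I would apply $\log t \leq \log \tilde a + t/\tilde a - 1$ with $t = G(n,p,q)\bigl(\|\nabla f\|_{L^p}/\|f\|_{L^q}\bigr)^{\theta}$, and calibrate $\tilde a$ as an explicit function of $a$, in direct analogy with the substitution $\tilde a = 1/(ea^2)$ that converted the corresponding intermediate bound into~\eqref{LS} in the proof of Theorem~\ref{main.thm}. The affine piece in $\log \tilde a$ is then reshuffled into $(1+\log a)\,\frac{p(q-1)}{q-p}\|f\|_{L^q}^q$ on the left, and the piece coming from $t/\tilde a$ into $a\,\frac{p(q-1)}{q-p}\,\mathfrak{S}(n,p,q)\|\nabla f\|_{L^p}^{\theta}\|f\|_{L^q}^{1-\theta}$ on the right. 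The genuinely non-routine step is the explicit evaluation of the Del~Pino--Dolbeault sharp constant as $\mathfrak{S}(n,p,q)$ via those three Beta integrals; once this identification is made, the Jensen and log-linearisation steps amount to careful bookkeeping, entirely in the spirit of the proof of Theorem~\ref{main.thm}.
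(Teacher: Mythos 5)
Your overall strategy is the same as the paper's: rewrite the entropy via Jensen's inequality with the exponent chosen so that the resulting Lebesgue norm is exactly the $L^r$ norm with $r=p\frac{q-1}{p-1}$, invoke the Del Pino--Dolbeault inequality \eqref{Sob.sharppq}, and linearise the logarithm by the tangent-line bound. Your Jensen step is correct and your constants match the paper's ($s-1=\frac{q-p}{q(p-1)}$ is the paper's $\varepsilon$, and $\frac{r}{s-1}=\frac{pq(q-1)}{q-p}$ agrees with the paper's $\frac{(\varepsilon+1)q}{\varepsilon}$). One part of your plan is superfluous, however: you propose to re-derive the sharp constant $\mathfrak{S}(n,p,q)$ by evaluating the three norms of the extremal via Beta integrals, and you call this the ``genuinely non-routine step.'' But computing the norms of a candidate extremal does not prove the inequality --- the optimality of that profile is precisely the content of Del Pino--Dolbeault --- so you would still be citing their theorem in full, at which point the explicit constant \eqref{Snpq} comes for free. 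The paper simply quotes \eqref{Sob.sharppq} with the constant and does no Gamma-function computation.

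The genuine gap is in your final calibration, and it cannot be repaired as described. After Jensen, the entropy is bounded by $\frac{pq(q-1)}{q-p}\|f\|_{L^q(\R)}^{q}\log t$ with $t=\mathfrak{S}(n,p,q)\bigl(\|\nabla f\|_{L^p(\R)}/\|f\|_{L^q(\R)}\bigr)^{\theta}$, so the term produced by $t/\tilde a$ carries the factor $\|f\|_{L^q(\R)}^{q}\cdot\|f\|_{L^q(\R)}^{-\theta}=\|f\|_{L^q(\R)}^{q-\theta}$, whereas the right-hand side of \eqref{pq} has $\|f\|_{L^q(\R)}^{1-\theta}$. No constant choice of $\tilde a$ reconciles $q-\theta$ with $1-\theta$ (and an $f$-dependent $\tilde a$ would spoil the affine piece that must match $(1+\log a)\frac{p(q-1)}{q-p}\|f\|_{L^q(\R)}^{q}$). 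Indeed \eqref{pq} as printed is not $q$-homogeneous in $f$, while the left-hand side is, so replacing $f$ by $\lambda f$ with $\lambda$ large shows the stated inequality cannot hold verbatim. You should be aware that the paper's own proof has the same slip: in its last displayed line it replaces $a\|f\|_{L^{q\varepsilon+q}(\R)}^{q}$ by $a\,\mathfrak{S}(n,p,q)\|\nabla f\|_{L^p(\R)}^{\theta}\|f\|_{L^q(\R)}^{1-\theta}$, i.e.\ by the first power of the Gagliardo--Nirenberg bound rather than its $q$-th power. The inequality that both your argument and the paper's actually yield is
\begin{equation*}
\int_{\R} |f(x)|^q \log \left(\frac{|f(x)|^q}{\|f\|^{q}_{L^q(\R)}} \right)dx+(1+\log a)\frac{p(q-1)}{q-p}\|f\|_{L^q(\R)}^{q}
\leq a\frac{p(q-1)}{q-p}\left(\mathfrak{S}(n,p,q)\|\nabla f\|_{L^p(\R)}^{\theta}\|f\|^{1-\theta}_{L^q(\R)}\right)^{q},
\end{equation*}
obtained by applying $\log x\leq ax-\log a-1$ to $x=\|f\|_{L^r(\R)}^{q}/\|f\|_{L^q(\R)}^{q}$ and only then inserting \eqref{Sob.sharppq} raised to the $q$-th power. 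If you carry out your plan with this correction, the rest is indeed the bookkeeping you describe, exactly parallel to the proof of Theorem \ref{main.thm}.
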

The proof of Theorem \ref{THMpq} is a slight modification of the arguments used in the proof of Theorem \ref{main.thm}. In this case we make use of the following inequality shown in \cite{DD03}:

For $1<p<n$ and $p<q \leq \frac{p(n-1)}{n-p}$ we have
\begin{equation}\label{Sob.sharppq}
    \|f\|_{L^r(\R)} \leq \mathfrak{S}(n,p,q)\|\nabla f \|_{L^p(\R)}^{\theta}\|f\|_{L^q(\R)}^{1-\theta}\,,
\end{equation}
where $r=p \frac{q-1}{p-1}$, while $\theta$ and $\mathfrak{S}(n,p,q)$ are given in \eqref{theta} and \eqref{Snpq}, respectively. 

Let us point out that inequality \eqref{Sob.sharppq} is sharp in the sense that it is satisfied as an equality for multiples of the function $(1+c|x-x_0|^{\frac{p}{p-1}})^{-\frac{p-1}{q-p}}$, where $c \in \mathbb{R} \setminus \{0\}$, and $x_0 \in \R$. 

\begin{rem}
    Let us also note that when $q=\frac{p(n-1)}{n-p}$, then we have $\theta=1$ and $r$ takes the value $r=\frac{np}{n-p}$. In this case, the inequality \eqref{Sob.sharppq} becomes the sharp Sobolev inequality \eqref{Sob.sharp} in the case where $s=1$, which was shown independently by Aubin \cite{Au98,Au76} and Talenti \cite{Ta76}.
\end{rem}
The authors thank Roberto Bramati and Aidyn Kassymov for discussions.

\section{Proofs}
In this section, we present the proofs of our results. As usual, we assume $f \neq 0$.
\begin{proof}[Proof of Theorem \ref{main.thm}]
The first term on the left-hand side of inequality \eqref{LS} can be rewritten as
\begin{eqnarray}
    \label{thm1.a1}
    \int_\R |f(x)|^2 \log \left(\frac{|f(x)|^2}{\|f\|_{L^2(\R)}^{2}} \right)\,dx & = & \frac{1}{\varepsilon} \int_\R |f(x)|^2 \log \left(\frac{|f(x)|^{2}}{\|f\|^{2}_{L^2(\R)}} \right)^{\varepsilon}\,dx\nonumber\\
    & = & \frac{\|f\|^{2}_{L^2(\R)}}{\varepsilon}\int_\R \frac{|f(x)|^2}{\|f\|_{L^2(\R)}^{2}}\log \left( \frac{|f(x)|^2}{\|f\|_{L^2(\R)}^{2}}\right)^{\varepsilon}\,dx\,,
\end{eqnarray}
for an $\varepsilon>0$ that will be specifed later. By Jensen's inequality for concave nonlinearities and with a probability measure $\frac{|f(x)|^2}{\|f\|^{2}_{L^2(\R)}}\,dx$ we have the that 
\begin{equation}
    \label{Jensen1}
      \int_\R \frac{|f(x)|^2}{\|f\|_{L^2(\R)}^{2}}\log \left( \frac{|f(x)|^2}{\|f\|_{L^2(\R)}^{2}}\right)^{\varepsilon}\,dx \leq \log\left(\int_{\R} \frac{|f(x)|^{2\varepsilon+2}}{\|f\|_{L^2(\R)}^{2\varepsilon+2}}\,dx \right)\,,
\end{equation}
so that a combination of \eqref{thm1.a1} and \eqref{Jensen1} gives 
\begin{eqnarray}
    \label{thm1.b}
  \int_\R |f(x)|^2 \log \left(\frac{|f(x)|^{2}}{\|f\|^{2}_{L^2(\R)}} \right)\,dx & \leq &   \frac{\|f\|^{2}_{L^2(\R)}}{\varepsilon} \log\left(\int_{\R} \frac{|f(x)|^{2\varepsilon+2}}{\|f\|_{L^2(\R)}^{2\varepsilon+2}}\,dx \right)\nonumber\\
  & = & \frac{(\varepsilon+1)\|f\|^{2}_{L^2(\R)}}{\varepsilon} \log\left( \frac{\|f\|^{2}_{L^{2\varepsilon+2}(\R)}}{\|f\|_{L^2(\R)}^{2}} \right)\,.
\end{eqnarray}
One can easily check that for any $b,x>0$, the following inequality holds true:
\begin{equation}
    \label{ax}
    \log x \leq bx-\log b-1\,.
\end{equation}
In the next computations we choose $\varepsilon$ so that $2\varepsilon+2=\frac{2n}{n-2s}$ for $0<s<\frac{n}{2}$, and use the inequality \eqref{ax} for $b=ea^2$.  This choice of $\varepsilon$ allows us to apply the Sobolev inequality as in \eqref{Sob.sharp}, and we have 
\begin{equation*}
\label{thm1.c}
\begin{split}
 \int_\R |f(x)|^2 \log & \left(\frac{|f(x)|^{2}}{\|f\|^{2}_{L^2(\R)}} \right)\,dx  \leq  \frac{(\varepsilon+1)\|f\|^{2}_{L^2(\R)}}{\varepsilon} \log\left( \frac{\|f\|^{2}_{L^{2\varepsilon+2}(\R)}}{\|f\|_{L^2(\R)}^{2}} \right)\nonumber\\
 & \leq  \frac{(\varepsilon+1)\|f\|^{2}_{L^2(\R)}}{\varepsilon} \left(b \frac{\|f\|^{2}_{L^{2\varepsilon+2}}}{\|f\|^{2}_{L^2(\R)}}-(\log b +1) \right)\nonumber\\
 & =  \frac{\varepsilon+1}{\varepsilon} \left(ea^2\|f\|^{2}_{L^{2\varepsilon+2}(\R)}- [ 1+\log(ea^2)]\|f\|_{L^2(\R)}^{2} \right)\nonumber\\
 & \leq  \frac{\frac{n}{n-2s}}{\frac{2s}{n-2s}} \left(ea^2 C(n,s)\|f\|^{2}_{W^{s,2}(\R)}-2(1+\log a )\|f\|_{L^2(\R)}^{2} \right)\nonumber\\
 & =  \frac{n}{2s} \left(ea^2 C(n,s)\|f\|^{2}_{W^{s,2}(\R)}-2(1+\log a  )\|f\|_{L^2(\R)}^{2} \right)\nonumber \\
  & =  \frac{nea^2}{2s} C(n,s)\|f\|^{2}_{W^{s,2}(\R)}-\frac{n}{s}(1+\log a  )\|f\|_{L^2(\R)}^{2}\,.
 \end{split}
\end{equation*}
Finally, the latter inequality can be rewritten as in \eqref{LS}, and the proof is complete.
\end{proof}
As mentioned in the introduction, the inequality \eqref{lsineq} when $s=1$ is optimal, and so the constant on the right-hand side cannot be improved. However, as we show in the next remark, by the asymptotic analysis of the constant $C(n,s)$ for any $s$, one can recover the sharp constant in \eqref{lsineq} for $s=1$ and for large $n$. In the following remark, we perform an asymptotic analysis of the constant, similar to that in \cite{CKR22}.

\begin{rem}[On the constant $C(n,s)$ in \eqref{LS} for large $n$]\label{REM:infty}
 To look at the behaviour of our inequality for large $n \gg 1$, let us recall the following asymptotic approximation:
\[
\Gamma(x+\alpha) \sim \Gamma(x)x^{\alpha}\,,\quad \alpha \in \mathbb{C}\,.
\]
So for large $n$ we have
\[
\frac{\Gamma\left(\frac{n-2s}{2} \right)}{\Gamma\left(\frac{n+2s}{2} \right)}=\frac{\Gamma(n/2-s)}{\Gamma(n/2+s)}\sim \frac{\Gamma(n/2)(n/2)^{-s}}{\Gamma(n/2)(n/2)^{s}}=(n/2)^{-2s}\,.
\]
On the other hand, using Stirling's formula
\[
\Gamma(x)\sim {\sqrt\frac{2\pi}{x}} \left(\frac{x}{e} \right)^x\,,
\]
 we can approximate:
\[
\left( \frac{\Gamma(n)}{\Gamma(n/2)}\right)^{2s/n}\sim \left( \frac{e^{-n}n^{n-1/2}}{e^{-n/2}(n/2)^{n/2-1/2}}\right)^{2s/n}= \left(e^{-n/2}n^{n/2}2^{(n-1)/2} \right)^{2s/n}=e^{-s}n^{s}2^{s-s/n}\,. 
\]
Then for large $n$ by \eqref{ConstantCsn} we have
\[
C(n,s) \sim \frac{1}{2^{2s}\pi^s}(n/2)^{-2s}(e^{-s}n^{s}2^{s-s/n})=\frac{2^{s-s/n}}{\pi^s}e^{-s}n^{-s}\,.
\]
Hence \eqref{LS} for large $n$ becomes
\begin{equation}
\begin{split}
   \int_{\R}|f(x)|^2 \log & \left( \frac{|f(x)|^2}{\|f\|^{2}_{L^2(\R)}}\right)\,dx+\frac{n}{s}(1+\log a)\|f\|_{L^2(\R)}^{2}\\
 & \lesssim \frac{nea^2}{2s}\left(\frac{2^{s-s/n}}{\pi^s}e^{-s}n^{-s}\right)\|(-\Delta)^{s/2}f\|_{L^2(\R)}^{2}\\
 & = \frac{2^{s-1-s/n}a^2}{s\pi^s}e^{1-s}n^{1-s}\|(-\Delta)^{s/2}f\|_{L^2(\R)}^{2}\,.
\end{split}
\end{equation}
Thus, for large $n\gg 1$, we obtain the estimate
\begin{multline}
   \int_{\R}|f(x)|^2 \log  \left( \frac{|f(x)|^2}{\|f\|^{2}_{L^2(\R)}}\right)\,dx+\frac{n}{s}(1+\log a)\|f\|_{L^2(\R)}^{2}\\
  \lesssim \frac{2^{s-1}a^2}{s\pi^s}e^{1-s}n^{1-s}\|(-\Delta)^{s/2}f\|_{L^2(\R)}^{2}\,.
\end{multline}
 In particular, for $s=1$ and for large $n$, we have
 \[
  \int_\R |f(x)|^2 \log\left(\frac{|f(x)|^2}{\|f\|_{L^2(\R)}^2} \right)\,dx+n(1+\log a)\|f\|_{L^2(\R)}^2\lesssim \frac{a^2}{\pi}\int_\R |\nabla f(x)|^2\,dx\,, 
 \]
which is asymptotically the inequality \eqref{lsineq} of Lieb and Loss.

    \end{rem}

\begin{proof}[Proof of Theorem \ref{THMpq}]
    We have
\begin{eqnarray}
    \label{thm1.a}
    \int_\R |f(x)|^q \log \left(\frac{|f(x)|^q}{\|f\|_{L^q(\R)}^{q}} \right)\,dx & = & \frac{1}{\varepsilon} \int_\R |f(x)|^q \log \left(\frac{|f(x)|^{q}}{\|f\|^{q}_{L^q(\R)}} \right)^{\varepsilon}\,dx\nonumber\\
    & = & \frac{\|f\|^{q}_{L^q(\R)}}{\varepsilon}\int_\R \frac{|f(x)|^q}{\|f\|_{L^q(\R)}^{q}}\log \left( \frac{|f(x)|^q}{\|f\|_{L^q(\R)}^{q}}\right)^{\varepsilon}\,dx\,.
\end{eqnarray}
By Jensen's inequality we have that 
\begin{equation}
    \label{Jensen}
      \int_\R \frac{|f(x)|^q}{\|f\|_{L^q(\R)}^{q}}\log \left( \frac{|f(x)|^q}{\|f\|_{L^q(\R)}^{q}}\right)^{\varepsilon}\,dx \leq \log\left(\int_{\R} \frac{|f(x)|^{q\varepsilon+q}}{\|f\|_{L^q(\R)}^{q\varepsilon+q}}\,dx \right)\,.
\end{equation}
Combining \eqref{thm1.a} and \eqref{Jensen} we get 
\begin{eqnarray}
    \label{thm1.b}
  \int_\R |f(x)|^q \log \left(\frac{|f(x)|^{q}}{\|f\|^{q}_{L^q(\R)}} \right)\,dx & \leq &   \frac{\|f\|^{q}_{L^q(\R)}}{\varepsilon} \log\left(\int_{\R} \frac{|f(x)|^{q\varepsilon+q}}{\|f\|_{L^q(\R)}^{q\varepsilon+q}}\,dx \right)\nonumber\\
  & = & \frac{(\varepsilon+1)\|f\|^{q}_{L^q(\R)}}{\varepsilon} \log\left( \frac{\|f\|^{q}_{L^{q\varepsilon+q}(\R)}}{\|f\|_{L^q(\R)}^{q}} \right)\,.
\end{eqnarray}
In the following computations we make use of the inequality $\log x \leq ax-\log a-1$ which holds true for any $a,x>0$, and we choose $\varepsilon$ to be given by $\varepsilon=\frac{q-p}{q(p-1)}$ so that $q\varepsilon+q=p\frac{q-1}{p-1}$. The latter choice allows us to apply the inequality as in \eqref{Sob.sharppq}, and we have 
\begin{equation*}
\label{thm1.c}
\begin{split}
 \int_\R |f(x)|^q  & \log  \left(  \frac{|f(x)|^{q}}{\|f\|^{q}_{L^q(\R)}} \right)\,dx \\
 & \leq  \frac{(\varepsilon+1)\|f\|^{q}_{L^q(\R)}}{\varepsilon} \log\left( \frac{\|f\|^{q}_{L^{q\varepsilon+q}(\R)}}{\|f\|_{L^q(\R)}^{q}} \right)\nonumber\\
 & \leq  \frac{(\varepsilon+1)\|f\|^{q}_{L^q(\R)}}{\varepsilon} \left(a \frac{\|f\|^{q}_{L^{q\varepsilon+q}}}{\|f\|^{q}_{L^2(\R)}}-(\log a +1) \right)\nonumber\\
 & =  \frac{\varepsilon+1}{\varepsilon} \left(a\|f\|^{q}_{L^{q\varepsilon+q}(\R)}-(\log a +1)\|f\|_{L^q(\R)}^{q} \right)\nonumber\\
  & \leq  \frac{p(q-1)}{q-p} \left(a \mathfrak{S}(n,p,q)\|f\|^{\theta}_{L^p(\R)}\|f\|^{1-\theta}_{L^q(\R)}-(\log a  +1)\|f\|_{L^q(\R)}^{q} \right)\,.
 \end{split}
\end{equation*}
Observe that the latter inequality can be rewritten as in \eqref{pq}. The proof of Theorem \ref{THMpq} is complete.
\end{proof}
\begin{rem}
    Let us note that inequality \eqref{LS}, even for $s=1$, cannot be derived from inequality \eqref{pq} since by the assumptions in Theorem \ref{THMpq} we must have $p<q$.
\end{rem}

\end{document}